\newtheorem{theorem}{Theorem}[section]
\newtheorem{corollary}[theorem] {Corollary}
\newtheorem{definition}[theorem]{Definition}
\newtheorem{lemma} [theorem]{Lemma}
\newtheorem{remark}[theorem]{Remark}
\newcommand{\RR}{{\mathcal{R}}}
\newcommand{\ZZ}{\mathcal {Z}}
\newcommand{\bc}{\begin{center}}
\newcommand{\ec}{\end{center}}
\newcommand{\be}{\begin{enumerate}}
\newcommand{\ee}{\end{enumerate}}
\newcommand{\bi}{\begin{itemize}}
\newcommand{\ei}{\end{itemize}}
\newcommand{\beq}{\begin{equation}}
\newcommand{\eeq}{\end{equation}}
\newtheorem{exmp}{Example}
\newcommand{\bex}{\begin{exmp}}
	\newcommand{\eex}{\end{exmp}}
\newcommand{\seq}[1]{\ensuremath{ \left \{ #1(n): n \in \ZZ \right \}}}
\newcommand{\Sol}[1] {\textbf{Solution:}}
\newcommand {\abs}[1] {\lvert#1\rvert}
\newcommand{\normb}[1]{\ensuremath{\left\lVert#1\right\rVert }}
\newcommand\frightarrow{\scalebox{1}[.3]{$\rule[.45ex]{2ex}{1.5pt}%
		\kern-.2ex{\blacktriangleright}$}}
\newcommand{\vertiii}[1]{{\left\vert\kern-0.25ex\left\vert\kern-0.25ex\left\vert #1 
		\right\vert\kern-0.25ex\right\vert\kern-0.25ex\right\vert}}
\begin{document}
  
  \label{'ubf'}  
\setcounter{page}{1}                                 

\markboth {\hspace*{-9mm} \centerline{\footnotesize \sc
   Relations Between Discrete Maximal Operators in Harmonic Analysis  }
                 }
                { \centerline                           {\footnotesize \sc  
         Anupindi Sri Sakti Swarup AND Michael Alphonse                                                 } \hspace*{-9mm}              
               }


\begin{center}
{ 
       {\Large \textbf { \sc  Relations Between Discrete Maximal Operators in Harmonic Analysis
                               }
       }
\\

\medskip

{\sc Sri Sakti Swarup Anupindi and Michael Alphonse }\\
{\footnotesize Department of Mathematics, Birla Institute of Technology and Science, Jawahar Nagar,Hyderabad-500 078, Telangana, India}\\
{\footnotesize Department of Mathematics, Birla Institute of Technology and Science, Jawahar Nagar,Hyderabad-500 078, Telangana, India
}\\
{\footnotesize e-mail: {\it p20180442@hyderabad.bits-pilani.ac.in}}
{\footnotesize e-mail: {\it  alphonse@hyderabad-bits-pilani.ac.in}}
}
\end{center}

\thispagestyle{empty}

\hrulefill

\begin{abstract}  
{\footnotesize  In this paper, we define several types of maximal operators on sequence spaces occuring in Harmonic analysis and present various connections between them.
}
 \end{abstract}
 \hrulefill

{\small \textbf{Keywords: Calderon-Zygmund decomposition, Hardy-Littlewood Maximal Operator, Sharp Maximal Operator, Dyadic Maximal Operator, Good-$\lambda$ Inequality} }

\indent {\small {\bf 2020 Mathematics Subject Classification: 42B35,42B20} }

\section{Introduction}\label{sec1}
In this paper, we define several types of maximal operators on real valued sequence spaces and study relationships between them. Several such operators are defined in continuous case and studied in standard literature in harmonic analysis. We present some of them namely, Hardy-Littlewood maximal operator (both centered and non-centered), dyadic maximal operator and sharp maximal opeator. A good-$\lambda$ inequality is presented which relates dyadic maximal operator and sharp maximal operator. For details of these maximal operators on real line, refer to~\cite{Duan1}. \\ \\
The method of Calderon-Zygmund decomposition on sequence spaces plays an important role in studying the relationship between these operators~\cite{Sak1}. In the discrete case, Calderon-Zygmund decomposition uses dyadic intervals. When we study the relation between the maximal operators, we are required to double the intervals that destroys the dyadic nature of the intervals. This challenge is not there in the case of real line~\cite{Duan1}.

 \section{Preliminaries and Notation}

Throughout this paper, $\ZZ$ denotes set of all integers and $\ZZ_{+}$ denotes set of all positive integers. For a given interval $I$ in $\ZZ$ , $\abs{I}$ always denotes the cardinality of $I$. 
 For each positive integer N, consider the collection of disjoint intervals of cardinality $2^N$, 
\[ \left \{ I_{N,j} \right \}_{j \in \ZZ} = \left \{ [ (j-1)2^N+1, \dots, j2^N ] \right \}_{ j \in \ZZ}  \]
The set of all intervals which are of the form $I_{N, j}$ where $ N \in \ZZ_{+}$ and $j \in \ZZ$ are 
called dyadic intervals. For fixed $N$, we denote the set of all intervals  $\left \{ I_{N,j} \right \}_{j \in \ZZ}$ as $\mathcal{I}_N$. For fixed N, the intervals in $\mathcal{I}_N$ are disjoint.
For a dyadic interval I, we define
\begin{align*}
2LI & = [ (j-2)2^N+1, \dots, j2^N] \\
2RI & = [(j-1)2^N+1, \dots, (j+1)2^N] \\
3I & = [(j-2)2^N+1, \dots, (j+1)2^N ] 
\end{align*}
Note $\abs{3I}=3.2^N$. Note that $2LI, 2RI$ are dyadic intervals each of length $2^{N+1}$ but $3I$ is  not an dyadic interval.\\

The Calderon-Zygmund decomposition theorem for sequences~\cite{Sak1} is as follows. 
\begin{theorem}
Let $1\leq p < \infty$ and a $\in \ell^p(\ZZ)$. For every $t>0$, and $0\leq \alpha<1$, there exists a sequence of disjoint dyadic intervals $\left \{ I_j^t \right \}$ such that
\begin{align*}
(i)& \quad t < \frac{1}{ \abs{I_j^t}^{1-\alpha}   } \sum_{k \in I_j^t} \abs{a(k)} \leq 2t,\forall j \in \ZZ \\
(ii)& \quad \forall n  \not\in \cup_j I_j^t, \quad \abs{a(n} \leq t \\
(iii)& \quad \text{If} \quad t_1 > t_2 , \quad \textbf{then each} \quad I_j^{t_1} \quad \textbf{is subinterval of some} \quad I_m^{t_2}, \quad \forall j,m \in \ZZ
\end{align*}
\end{theorem}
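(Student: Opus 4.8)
The plan is to run the dyadic stopping‑time (selection) argument used for the Calder\'on--Zygmund decomposition on $\Reals$, the only genuinely discrete complication being that one must first make sure the stopping time terminates. For a dyadic interval $I$ write
\[
	\mathcal{A}(I)\;=\;\frac{1}{\abs{I}^{1-\alpha}}\sum_{k\in I}\abs{a(k)}
\]
for its (fractional) average, so that condition (i) asks for $t<\mathcal{A}(I_j^t)\le 2t$. First I would record the basic estimate coming from $a\in\ell^p(\ZZ)$: by H\"older's inequality $\sum_{k\in I}\abs{a(k)}\le \abs{I}^{1-1/p}\norm{a}_{\ell^p(\ZZ)}$, hence
\[
	\mathcal{A}(I)\;\le\;\abs{I}^{\,\alpha-1/p}\norm{a}_{\ell^p(\ZZ)}\;\longrightarrow\;0\qquad\text{as }\abs{I}\to\infty
\]
(for $p>1$ this uses $\alpha<1/p$; for $p=1$ one may instead bound $\sum_{k\in I}\abs{a(k)}\le\norm{a}_{\ell^1(\ZZ)}$ directly and use $\alpha<1$). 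Since for a fixed $n$ the dyadic intervals containing $n$ form one nested chain --- one at each scale --- this shows that only finitely many of them have $\mathcal{A}(I)>t$, so that chain has a largest member with $\mathcal{A}>t$, if any.

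Next comes the selection. Put $\mathcal{E}_t=\{\,n\in\ZZ:\ \mathcal{A}(I)>t\text{ for some dyadic }I\ni n\,\}$ and, for $n\in\mathcal{E}_t$, let $I^t(n)$ be the maximal dyadic interval containing $n$ with $\mathcal{A}(I^t(n))>t$ (well defined by the previous paragraph, and unique since dyadic intervals through a common point are totally ordered by inclusion). If $n,m\in\mathcal{E}_t$ and $I^t(n)\cap I^t(m)\neq\emptyset$, then one of $I^t(n),I^t(m)$ contains the other (dyadic intervals either nest or are disjoint), so by maximality they coincide; thus $\{I^t(n):n\in\mathcal{E}_t\}$ is a family of pairwise disjoint dyadic intervals, necessarily countable since each of them contains an integer. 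Enumerate it as $\{I_j^t\}$; it covers $\mathcal{E}_t$ by construction.

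I would then check (i)--(iii). For (i), the lower bound $\mathcal{A}(I_j^t)>t$ is the defining property; for the upper bound let $\widehat I$ be the dyadic interval of length $2\abs{I_j^t}$ containing $I_j^t$, so that $\mathcal{A}(\widehat I)\le t$ by maximality of $I_j^t$, whence
\[
	\sum_{k\in I_j^t}\abs{a(k)}\;\le\;\sum_{k\in\widehat I}\abs{a(k)}\;=\;\abs{\widehat I}^{\,1-\alpha}\mathcal{A}(\widehat I)\;\le\;\bigl(2\abs{I_j^t}\bigr)^{1-\alpha}t
\]
and therefore $\mathcal{A}(I_j^t)\le 2^{1-\alpha}t\le 2t$. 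For (ii), if $n\notin\bigcup_j I_j^t$ then $n\notin\mathcal{E}_t$, so in particular the smallest dyadic interval containing $n$ has average $\le t$, giving $\abs{a(n)}\le t$ (at once if singletons count as dyadic intervals; otherwise $\abs{a(n)}\le 2^{1-\alpha}t$, the same up to the harmless factor). For (iii), suppose $t_1>t_2$ and fix $n\in I_j^{t_1}$; since $\mathcal{A}(I_j^{t_1})>t_1>t_2$ we have $n\in\mathcal{E}_{t_2}$, and $I_j^{t_1}$ is a dyadic interval containing $n$ with $\mathcal{A}>t_2$, so --- the dyadic intervals through $n$ being totally ordered --- $I_j^{t_1}\subseteq I^{t_2}(n)$, which is one of the $I_m^{t_2}$.

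The main obstacle is the first paragraph: unlike on $\Reals$, where the averages of an $\ell^p$‑type function over large cubes automatically tend to $0$, here termination of the stopping time must be argued by hand, and this is exactly where the hypotheses $a\in\ell^p(\ZZ)$ and $t>0$ enter (together with the interplay of $\alpha$ and $p$). The remaining steps are routine once one keeps in mind the point stressed in the introduction: passing to the parent $\widehat I$ stays inside the dyadic grid, so the upper bound in (i) costs only the factor $2^{1-\alpha}$, whereas forming $3I$ later will not be dyadic.
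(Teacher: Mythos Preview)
The paper does not prove this statement; it is quoted from reference~[4] in the preliminaries section and used as a black box (only with $\alpha=0$) later on. Your argument is the standard dyadic stopping-time construction and is what one expects that reference to contain; the selection of maximal dyadic intervals, the parent estimate for the upper bound in (i), and the nesting argument for (iii) are all correct.

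Two remarks are worth recording. First, the constraint you flag in the termination step is genuine, not an artefact of your method: for $p>1$ and $\alpha>1/p$ the statement as written can fail. For instance, with small $\varepsilon>0$ the sequence $a(k)=k^{-(1/p+\varepsilon)}$ for $k\ge1$ lies in $\ell^p(\ZZ)$, yet $\sum_{k=1}^{2^N}a(k)\asymp 2^{N(1-1/p-\varepsilon)}$, so $\mathcal{A}([1,2^N])\asymp 2^{N(\alpha-1/p-\varepsilon)}$ does not tend to~$0$ when $\alpha>1/p+\varepsilon$, and there is no maximal interval to select. Since the paper only ever invokes the case $\alpha=0$, this does not affect anything downstream, but it means the hypothesis ``$0\le\alpha<1$'' in the stated theorem should really read ``$0\le\alpha<1/p$'' (with the convention $1/p=1$ when $p=1$). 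Second, your hedge in (ii) is justified: with the paper's convention $N\in\ZZ_{+}$ the smallest dyadic intervals have length~$2$, so the pointwise bound one actually obtains is $\abs{a(n)}\le 2^{1-\alpha}t$ rather than $\abs{a(n)}\le t$. This is again harmless for the applications in Sections~4.
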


An operator $T$ is bounded on  $\ell^p(\ZZ)$ if  $ \forall a \in \ell^p(\ZZ)$
\[ \normb{Ta}_{\ell^p(\ZZ)} \leq C_p \normb{a}_{\ell^p(\ZZ)} \] 

An operator $T$ is of weak type (1,1)  on $\ell^p(\ZZ)$ if for each $a \in \ell_1(\ZZ)$ 
\[ \abs{ \left \{ m \in \ZZ: \abs{Ta(m) } > \lambda \right \} } \leq \frac{C}{\lambda} \normb{a}_1 \]

For $\seq{a} \in \ell^p(\ZZ)$, norm in $\ell^p(\ZZ)$  (refer to ~\cite{Duan1}) is given by
\[ \normb{a}_{\ell^p(\ZZ)} = \int_0^\infty p {\lambda}^{p-1} \abs {\left \{ m \in \ZZ: \abs{a(m)}>\lambda \right \} } d \lambda \]

%



\section{ Definitions}
\subsection{Maximal Operators}
Let $\seq{a}$ be a sequence. We define the following three types of Hardy-Littlewood maximal operators as follows:
\begin{definition}
 If $I_r$ is the interval $\left \{-r,-r+1, \dots, 0, 1, 2, \dots,r-1,r \right\} $, define centered Hardy-Littlewood maximal operator
\[ M^\prime a(m) = \sup_{r >0} \frac{1}{(2r)}  \sum_{n \in I_r} \abs{a(m-n)}    \]
We define Hardy-Littlewood maximal operator as follows

\[ M a(m) = \sup_{m \in I} \frac{1}{\abs{I}} \sum_{ n \in I} \abs{a(n)}   \]
 where the supremum is taken over all intervals containing $m$.

\end{definition}

\begin{definition}
We define dyadic Hardy-Littlewood maximal operator as follows:
\[ M_d a(m) = \sup_{ m \in I} \frac{1}{\abs{I}} \sum_{ k \in I} \abs{a(k)} \]
where supremum is taken over all dyadic intervals containing $m$.

\end{definition}
 Given a sequence $\seq{a}$ and an interval $I$, let $a_I$ denote average of $\textbf{\seq{a}}$ on $I$, i.e $ a_I = \frac{1}{\abs{I}} \sum_{ m \in I} a(m) $. Define the sharp maximal operator $M^{\#}$ as follows  
\[ M^{\#} a(m) = \sup_{ m \in I} \frac{1}{\abs{I}} \sum_{ n \in I} \abs{ a(n) - a_I} \]
where the supremum is taken over all intervals $I$ containing $m$. We say that sequence $\seq{a}$ has bounded mean oscillation if the
sequence $M^{\#}a$ is bounded. The space of sequences with this property is called sequences of bounded mean oscillation and is denoted by BMO($\ZZ$).
We define a norm in BMO($\ZZ$) by $ \normb{a}_\star = \normb{M^{\#}a}_\infty$. The space BMO($\ZZ$) is studied in $\text{\cite{Mich1},\cite{Mich2}}$.

\section{ Relation between Maximal operators }
\begin{theorem}{\label{Sak_lem_2.11}}
Given a sequence $\left \{ a(m): m \in \ZZ \right \}$, the following relation holds:
\[   M^{\prime}a(m)  \leq M a(m) \leq 3 M^\prime a(m) \]
\end{theorem}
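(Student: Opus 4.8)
The plan is to prove the two inequalities independently, each by pairing the interval over which one maximal function takes its supremum with a comparable interval admissible for the other. Both $M'a(m)$ and $Ma(m)$ are suprema of averages of $\abs{a}$ over intervals containing $m$ --- for $M'$ these are exactly the symmetric windows $J_r := \{m-r, m-r+1, \dots, m+r\}$, reached after the substitution $k = m-n$ in $\sum_{n \in I_r}\abs{a(m-n)}$ --- so the whole argument reduces to comparing these two families of intervals together with their cardinalities.

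First, for $M'a(m) \le Ma(m)$: fix $r > 0$. The window $J_r$ is an interval containing $m$, and after the substitution $k=m-n$ the $r$-th term in the supremum defining $M'a(m)$ is precisely the average of $\abs{a}$ over $J_r$. Since this average is one of the averages over which the supremum defining $Ma(m)$ is taken, it is $\le Ma(m)$; taking the supremum over $r > 0$ gives the inequality.

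Next, for $Ma(m) \le 3M'a(m)$: let $I \ni m$ be arbitrary, write $I = \{p, p+1, \dots, q\}$ with $p \le m \le q$, and set $r = \max\{m-p,\, q-m\}$ (taking $r = 1$ if this maximum is $0$). Then on the one hand $I \subseteq J_r$, and on the other $\abs{I} \ge r+1$, because $I$ contains the block of integers from $p$ to $m$ or the block from $m$ to $q$, one of which has exactly $r+1$ elements. Therefore
\[ \frac{1}{\abs{I}}\sum_{k \in I}\abs{a(k)} \;\le\; \frac{1}{r+1}\sum_{k \in J_r}\abs{a(k)} \;=\; \frac{\abs{J_r}}{r+1}\cdot\frac{1}{\abs{J_r}}\sum_{k \in J_r}\abs{a(k)} \;\le\; \frac{\abs{J_r}}{r+1}\,M'a(m), \]
and $\abs{J_r}/(r+1) = (2r+1)/(r+1) < 2 \le 3$; the degenerate case $I = \{m\}$ is handled directly via $\abs{a(m)} \le \sum_{k \in J_1}\abs{a(k)} \le 3\,M'a(m)$. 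Taking the supremum over all $I \ni m$ yields $Ma(m) \le 3M'a(m)$.

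The main point requiring care is the cardinality count in the second step: one is doubling an arbitrary interval about the distinguished point $m$ rather than about the interval's own midpoint, and one must check that the resulting loss --- together with the contribution of the smallest intervals around $m$ --- never exceeds the factor $3$ (in fact $2$ suffices away from the degenerate case). Everything else follows immediately from the definitions once the index substitution $k = m-n$ is in place.
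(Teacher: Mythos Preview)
Your proof is correct and follows essentially the same approach as the paper: the first inequality is immediate because the centered windows $J_r$ form a subfamily of all intervals containing $m$, and for the second you embed an arbitrary interval $I\ni m$ into the symmetric window $J_r$ with $r$ equal to the larger of the two one-sided radii and compare cardinalities. Your estimate $\lvert I\rvert \ge r+1$ is marginally sharper than the paper's $\lvert I\rvert \ge r \ge \tfrac{1}{3}(2r+1)$, which is why you obtain the constant $2$ away from the singleton case, but the argument is otherwise identical.
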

\begin{proof}
First inequality is obvious as $M^\prime a$ considers supremum over centered intervals, while $M$ considers supremum over all intervals. 
For second inequality,
let $I = [ m-r_1, m-r_1+1, \dots, m+r_2-1, m+r_2 ]$ containing $m$. Let $ r = max \left\{ r_1, r_2 \right \}$. Consider $I_1 = [ m-r, m-r+1, \dots, m+r-1, m+r ]$ containing $m$. Note that $\abs{I_1}= 2r+1, \abs{I} = r_1+r_2+1$. 
Then
\[ \abs{I} = r_2 + r_1 + 1 \geq r = \frac{1}{3} 3r \geq \frac{1}{3} (2r+1) =  \frac{1}{3} \abs{I_1} \] This gives
\[ \frac{1}{\abs{I}} \sum_{ k \in I} \abs{a(k)} \leq \frac{3}{\abs{I_1}} \sum_{ k \in I_1} a(k) \leq 3 M^\prime a(m) \]
\end{proof}

\begin{theorem}{\label{Sak_lem_2.12}} 
If $\textbf{a}= \left \{ a(k): k \in \ZZ \right \}$ is a sequence with $\textbf{a} \in \ell_1$, then 
\[ \abs{ \left \{  m \in  \ZZ: M^\prime a(m) > 4  \lambda \right \} } \leq 3  \abs{ \left \{ m \in \ZZ: M_d a(m) > \lambda \right \} } \]
\end{theorem}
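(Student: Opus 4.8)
The plan is to compare the centered maximal operator $M'$ with the dyadic maximal operator $M_d$ by showing that whenever $M'a(m) > 4\lambda$ at a point $m$, then $m$ lies in a bounded-overlap union of dyadic intervals on each of which the dyadic average of $\abs{a}$ exceeds $\lambda$; the factor $3$ on the right will come from the fact that an arbitrary centered interval around $m$ can be covered by at most three dyadic intervals of comparable size. Concretely, first I would fix $m$ with $M'a(m) > 4\lambda$ and pick, by definition of $M'$, a radius $r$ with $\frac{1}{2r}\sum_{n \in I_r} \abs{a(m-n)} > 4\lambda$, i.e. $\frac{1}{2r+1}\sum_{k \in J}\abs{a(k)} > 4\lambda \cdot \frac{2r}{2r+1} \geq 2\lambda$ (for $r \geq 1$), where $J = [m-r, m+r]$ is a centered interval of length $2r+1$ around $m$.

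Next I would choose the dyadic scale: let $N$ be the integer with $2^{N-1} \leq 2r+1 < 2^{N}$ (adjust the exact inequality so the counting works out), so that $J$ is contained in the union of at most three consecutive intervals of $\mathcal{I}_N$, say $I_{N,j-1} \cup I_{N,j} \cup I_{N,j+1} \supseteq J$. Since $\abs{I_{N,j\pm 1}} = \abs{I_{N,j}} = 2^N \leq 2(2r+1)$, we get
\[
\sum_{i=j-1}^{j+1} \sum_{k \in I_{N,i}} \abs{a(k)} \;\geq\; \sum_{k \in J}\abs{a(k)} \;>\; 2\lambda\,(2r+1) \;\geq\; \lambda \cdot 2^N,
\]
so by pigeonhole at least one of the three dyadic intervals $I_{N,i}$, $i \in \{j-1,j,j+1\}$, satisfies $\frac{1}{\abs{I_{N,i}}}\sum_{k \in I_{N,i}}\abs{a(k)} > \frac{\lambda}{3}$. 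Hmm — this gives $\lambda/3$ rather than $\lambda$; so I would instead start from $M'a(m) > 4\lambda$ and push the constants so that the surviving dyadic average genuinely exceeds $\lambda$ (the slack between $4\lambda$ and $\lambda$, together with the $\frac{2r}{2r+1}$ factor, is exactly what is designed to absorb the loss of $3$ from covering and the loss of $2$ from doubling the length). In any case, each such $m$ belongs to a dyadic interval $I$ with dyadic average of $\abs{a}$ exceeding $\lambda$, hence $M_d a \equiv \frac{1}{\abs{I}}\sum_I \abs{a} > \lambda$ on all of $I$.

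Finally I would pass to the measure estimate. The set $E_d = \{m \in \ZZ : M_d a(m) > \lambda\}$ is a union of maximal dyadic intervals $\{I_\ell\}$ which, being maximal dyadic intervals, are pairwise disjoint (this is where the Calderón--Zygmund/dyadic structure is used). The argument above shows $\{M'a > 4\lambda\} \subseteq \bigcup_\ell I_\ell^{*}$, where $I_\ell^{*}$ denotes $I_\ell$ together with its two dyadic neighbours of the same scale — equivalently each point of $\{M'a > 4\lambda\}$ is within one dyadic block (at its own scale) of $E_d$. Since each dyadic interval in $E_d$ contributes at most itself plus two neighbours, $\abs{\{M'a > 4\lambda\}} \leq 3\abs{E_d} = 3\,\abs{\{M_d a > \lambda\}}$, which is the claim. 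The main obstacle I anticipate is precisely the bookkeeping in this last step: neighbours of distinct maximal dyadic intervals in $E_d$ may be at different scales and may overlap, so one has to argue the covering multiplicity is genuinely $\leq 3$ — most cleanly by showing directly that if $M'a(m) > 4\lambda$ then the specific dyadic interval $I_{N,i}$ produced above is contained in $E_d$ and has $m$ within distance $2^N$ of it, then organizing the count by the maximal dyadic intervals of $E_d$ rather than by the $I_{N,i}$'s.
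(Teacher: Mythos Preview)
Your pigeonhole step does not close with the constants you write. A centered interval $J$ with $2^{N-1}\le|J|<2^N$ meets at most \emph{two} intervals of $\mathcal I_N$, not three; but even with two, the crude pigeonhole---after you have already weakened the $4\lambda$ down to $2\lambda$---produces only a dyadic interval whose average exceeds $\lambda/2$, not $\lambda$. That interval therefore need not lie in $E_d=\{M_da>\lambda\}$, so you cannot hand off to the maximal dyadic intervals of $E_d$ as you propose in your last paragraph. The sentence ``push the constants so that the surviving dyadic average genuinely exceeds $\lambda$'' is exactly the missing step, and the slack between $4\lambda$ and $\lambda$ is not enough to absorb both the covering loss and the doubling loss by a naive pigeonhole.

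The paper sidesteps this by running the Calder\'on--Zygmund decomposition at height $\lambda$ \emph{first}, obtaining disjoint maximal dyadic intervals $\{I_j\}$ with $\bigcup_jI_j\subseteq\{M_da>\lambda\}$, and then proving $\{M'a>4\lambda\}\subseteq\bigcup_j3I_j$ by \emph{contraposition}. If $m\notin\bigcup_j3I_j$ and $I$ is any centered interval at $m$, cover $I$ by the two dyadic intervals $R_1,R_2\in\mathcal I_N$ at the scale $2^{N-1}\le|I|<2^N$; since $m\in 2RR_1\cap 2LR_2$ but $m\notin 3I_j$ for any $j$, neither $R_i$ can coincide with an $I_j$, and the Calder\'on--Zygmund maximality then forces \emph{both} $R_i$ to have average at most $\lambda$. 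Hence $\frac{1}{|I|}\sum_{k\in I}|a(k)|\le\frac{|R_1|+|R_2|}{|I|}\,\lambda\le 4\lambda$ directly, with no pigeonhole and no lost factor. The final count $\bigl|\bigcup_j3I_j\bigr|\le 3\sum_j|I_j|\le 3\,|\{M_da>\lambda\}|$ follows from disjointness of the $I_j$; your worry about overlapping neighbours at different scales is unnecessary, since overlap of the sets $3I_j$ can only decrease the left-hand side.
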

\begin{proof}
Using Calderon Zygmund decomposition at height $\lambda$, we obtain a collection of disjoint dyadic intervals $\left \{I_j: j \in \ZZ^{+} \right\}$ such that
\[ \lambda < \frac{1}{\abs{I_j}} \sum_{k \in I_j} \abs{a(k)} \leq 2\lambda \] Then
\[  \cup_j I_j \subseteq \left \{ m \in \ZZ: M_d a(m) > \lambda \right \}   \]
It suffices to show that 
\[ \left \{ m \in \ZZ: M^\prime a(m) > 4  \lambda \right \} \subset \cup_j 3I_j \]
Let $m \notin \cup_j 3I_j$. We shall prove $ m \notin \left \{ k \in \ZZ: M^\prime a(k) > 4 \lambda \right \}$.
Let $I$ be any interval centered at $m$. Choose $ N \in \ZZ_{+}$ such that $ 2^{N-1} \leq \abs{I} < 2^N $. Then $I$ intersects exactly 2 dyadic intervals in $\mathcal{I}_N$ say $R_1, R_2$. Assume $R_1$ intersects $I$ on the left and $R_2$ intersects $I$ on the right. Since $m \notin \cup_{j=1}^\infty 3I_j$, $m \notin 2RI_j, j= 1 \dots$ and $ m \notin 2LI_j, j =1,\dots$. But $m \in 2RR_1$ and $m \in 2LR_2$. 

Therefore, both $R_1$ and $R_2$ cannot be any one of $I_j$.

%
%

Hence, the average of $\seq{a}$ on each $R_i, i =1, 2 $ is at most $\lambda$. Further note that $\frac{\abs{R_1}}{\abs{I}} \leq 2, \frac{\abs{R_2}}{\abs{I}} \leq 2 $. So 
\begin{align*}
 \frac{1}{\abs{I}} \sum_{m \in I} \abs{a(m)} & \leq \frac{1}{\abs{I}} \biggl( \sum_{ k \in R_1} \abs{a(k)} + \sum_{ k \in R_2} \abs{a(k)} \biggr) \\
& = \biggl( \frac{1}{\abs{R_1}} \frac{\abs{R_1}}{\abs{I}} \sum_{k \in R_1} \abs{a(k) } + \frac{1}{\abs{R_2}} \frac{\abs{R_2}}{\abs{I}} \sum_{k \in R_2} \abs{a(k) } \biggr)\\
& \leq 2 \biggl( \frac{1}{\abs{R_1}}   \sum_{k \in R_1} \abs{a(k) } + \frac{1}{\abs{R_2}}   \sum_{k \in R_2} \abs{a(k) } \biggr)  = 2(\lambda+ \lambda) = 4 \lambda
\end{align*}
\end{proof}

\begin{corollary}{\label{Sak_cor_1}}
For a sequence $\seq{a}$, if $M_d a \in \ell^p(\ZZ), 1< p< \infty$, then 
\[ \normb{M^\prime a}_{\ell^p(\ZZ)} \leq C \normb{M_d a}_{\ell^p(\ZZ)} \]
\end{corollary}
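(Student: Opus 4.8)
The plan is to derive the norm inequality from the distributional estimate of Theorem~\ref{Sak_lem_2.12} by integrating it against the weight $p\lambda^{p-1}$; concretely, I would invoke the layer-cake identity
\[ \normb{f}_{\ell^p(\ZZ)}^p = \int_0^\infty p\,\lambda^{p-1}\,\abs{\left\{ m\in\ZZ : \abs{f(m)} > \lambda \right\}}\, d\lambda \]
recalled in the Preliminaries. The only point that needs care is that Theorem~\ref{Sak_lem_2.12} is stated for sequences in $\ell_1$, whereas here we only assume $M_d a\in\ell^p(\ZZ)$, so I would first reduce to the finitely supported case.

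\emph{Step 1 (Truncation).} For $M\in\ZZ_+$ put $a^{(M)}(k)=a(k)$ for $\abs{k}\le M$ and $a^{(M)}(k)=0$ otherwise. Then $a^{(M)}$ is finitely supported, hence lies in $\ell_1(\ZZ)$, and $\abs{a^{(M)}}$ increases to $\abs{a}$ pointwise. Since the averages defining $M^\prime$ and $M_d$ are finite sums, one checks directly that $M_d a^{(M)}\le M_d a$ for all $M$, and that $M^\prime a^{(M)}(m)\uparrow M^\prime a(m)$ for each $m$ (interchange the supremum over centered intervals with the increasing limit in $M$). So it suffices to prove the claimed bound for each $a^{(M)}$ with a constant independent of $M$, and then pass to the limit.

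\emph{Step 2 (Integration of the distributional inequality).} Fix $M$ and set $b=a^{(M)}\in\ell_1(\ZZ)$. Theorem~\ref{Sak_lem_2.12} gives, for every $\lambda>0$,
\[ \abs{\left\{ m\in\ZZ : M^\prime b(m) > 4\lambda \right\}} \le 3\,\abs{\left\{ m\in\ZZ : M_d b(m) > \lambda \right\}} \le 3\,\abs{\left\{ m\in\ZZ : M_d a(m) > \lambda \right\}}, \]
the last step using $M_d b\le M_d a$ from Step~1. Applying the layer-cake identity to $M^\prime b$ and substituting $\lambda=4\mu$,
\[ \normb{M^\prime b}_{\ell^p(\ZZ)}^p = \int_0^\infty p\,\lambda^{p-1}\,\abs{\{M^\prime b>\lambda\}}\,d\lambda = 4^p\int_0^\infty p\,\mu^{p-1}\,\abs{\{M^\prime b>4\mu\}}\,d\mu \le 3\cdot 4^p\int_0^\infty p\,\mu^{p-1}\,\abs{\{M_d a>\mu\}}\,d\mu, \]
and the last integral equals $3\cdot 4^p\,\normb{M_d a}_{\ell^p(\ZZ)}^p$. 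Hence $\normb{M^\prime b}_{\ell^p(\ZZ)}\le 4\cdot 3^{1/p}\,\normb{M_d a}_{\ell^p(\ZZ)}$, uniformly in $M$.

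\emph{Step 3 (Passing to the limit) and the main obstacle.} Letting $M\to\infty$ and using $M^\prime a^{(M)}\uparrow M^\prime a$ together with monotone convergence for $\ell^p$-sums yields $\normb{M^\prime a}_{\ell^p(\ZZ)}\le 4\cdot 3^{1/p}\,\normb{M_d a}_{\ell^p(\ZZ)}$, i.e.\ the assertion with $C=4\cdot 3^{1/p}$ (one may simply take $C=12$). I expect the only real obstacle to be the bookkeeping in Steps~1 and~3: verifying that Theorem~\ref{Sak_lem_2.12} is applicable after truncation and that the estimate survives the limit, which rests on the elementary facts $M^\prime a^{(M)}\uparrow M^\prime a$ and $M_d a^{(M)}\le M_d a$. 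Everything else is the standard distribution-function computation, where the constant $4$ inside Theorem~\ref{Sak_lem_2.12} is exactly what gets absorbed, after integrating the weight $p\lambda^{p-1}$, into the final factor $4\cdot 3^{1/p}$. (The same argument in fact works verbatim for $p=1$, giving $\normb{M^\prime a}_{\ell^1(\ZZ)}\le 12\,\normb{M_d a}_{\ell^1(\ZZ)}$.)
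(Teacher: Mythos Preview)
Your proof is correct and follows the same approach as the paper: integrate the distributional inequality of Theorem~\ref{Sak_lem_2.12} against $p\lambda^{p-1}$ via the layer-cake formula, arriving at the same constant $C=4\cdot 3^{1/p}$. The only difference is that you insert a truncation argument (Steps~1 and~3) to guarantee the $\ell_1$ hypothesis of Theorem~\ref{Sak_lem_2.12}, a point the paper simply glosses over; this is added rigor, not a different route.
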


\begin{proof}
\begin{align*}
\normb{M^\prime a}_{\ell^p(\ZZ)}& = \int_0^\infty p \lambda^{p-1} \abs{ \left \{ m: M^\prime a(m) > \lambda \right \} } d\lambda \\
& \leq 3 (4)^{p-1} \int_0^\infty p ({\frac{\lambda}{4}})^{p-1} \abs{ \left \{ m: M_d a(m) > \frac{\lambda}{4} \right \} } d\lambda  \\
& \leq 3 (4)^{p} \int_0^\infty p u^{p-1} \abs{ \left \{ m: M_d a(m) > u \right \} } du  \\
& \leq 3(4)^p  \normb{M_d a}_{\ell^p(\ZZ)}
\end{align*}

\end{proof}

\begin{remark}{\label{Sak_rem_1}}
From corollary[$\ref{Sak_cor_1}$], whenever $M^\prime$ is of weak type (1,1), then $M_d$ is also of weak type (1,1). 
From Theorem[$\ref{Sak_lem_2.12}$], if $M^\prime$ is of weak type (1,1), then $M_d$ is of weak type (1,1). 
From Theorem[$\ref{Sak_lem_2.11}$], if $M$ is of weak type (1,1), then $M^\prime$ is of weak type (1,1). It is well known that $M$ is of weak  type (1,1), refer to~\cite{Sak1}.
\end{remark}

In the following lemma, we see that in the norm of BMO($\ZZ$) space, we can replace the average $a_I$ of $\left \{ a(n) \right \}$ by a constant $b$. The proof is similar to the proof in continuous version~\cite{Duan1}. We provide the proof for the sake of completeness. 
\begin{lemma}{\label{Sak_Prop_6.5}} 
Consider  a non-negative sequence $\textbf{a}= \left \{ a(k): k \in \ZZ \right \}$. Then the following are valid.
\begin{align*}
1.& \quad \frac{1}{2} \normb{a}_{\star} \leq \sup_{m \in I} \inf_{b \in \RR} \frac{1}{\abs{I}} \abs{ a(m)- b} \leq \normb{a}_\star \\
2.& \quad M^{\#}(\abs{a})(i) \leq M^{\#} a(i) , i \in \ZZ
\end{align*}

\end{lemma}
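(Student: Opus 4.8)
The plan is to read part (1) as the discrete analogue of the familiar equivalence between the $\mathrm{BMO}(\ZZ)$ norm, which (since every interval meets $\ZZ$) equals $\normb{a}_\star=\normb{M^{\#}a}_\infty=\sup_I\frac{1}{\abs{I}}\sum_{n\in I}\abs{a(n)-a_I}$ with the supremum taken over all intervals $I$, and its ``best constant'' variant $\Phi:=\sup_I\inf_{b\in\RR}\frac{1}{\abs{I}}\sum_{n\in I}\abs{a(n)-b}$, in which the average $a_I$ is replaced by an optimally chosen real number $b$. The right-hand inequality $\Phi\le\normb{a}_\star$ is immediate: for each fixed $I$ the infimum over $b$ is no larger than the value at the admissible choice $b=a_I$, namely $\frac{1}{\abs{I}}\sum_{n\in I}\abs{a(n)-a_I}$; taking the supremum over $I$ yields $\Phi\le\normb{a}_\star$.

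For the left-hand inequality $\tfrac12\normb{a}_\star\le\Phi$ I would fix an interval $I$ and an arbitrary $b\in\RR$, and bound, using the triangle inequality term by term,
\[
\frac{1}{\abs{I}}\sum_{n\in I}\abs{a(n)-a_I}\;\le\;\frac{1}{\abs{I}}\sum_{n\in I}\abs{a(n)-b}\;+\;\abs{b-a_I}.
\]
Since $b-a_I=\frac{1}{\abs{I}}\sum_{n\in I}\bigl(b-a(n)\bigr)$, the term $\abs{b-a_I}$ is itself at most $\frac{1}{\abs{I}}\sum_{n\in I}\abs{a(n)-b}$, so the right-hand side does not exceed $\frac{2}{\abs{I}}\sum_{n\in I}\abs{a(n)-b}$. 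As $b$ was arbitrary, the left-hand side is $\le 2\inf_b\frac{1}{\abs{I}}\sum_{n\in I}\abs{a(n)-b}\le 2\Phi$; taking the supremum over $I$ gives $\normb{a}_\star\le 2\Phi$, which is the claim.

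For part (2), the standing hypothesis that $\mathbf{a}$ is non-negative makes the statement trivial: then $\abs{a(k)}=a(k)$ for every $k$, so $M^{\#}(\abs{a})=M^{\#}a$ pointwise. (For a general real sequence the same inequality still holds up to a factor $2$: the reverse triangle inequality $\bigl|\,\abs{a(n)}-\abs{a_I}\,\bigr|\le\abs{a(n)-a_I}$ gives $\frac{1}{\abs{I}}\sum_{n\in I}\bigl|\abs{a(n)}-\abs{a_I}\bigr|\le M^{\#}a(i)$ for every interval $I\ni i$, and combining this with the local form of the estimate in part (1), applied to the sequence $\abs{a}$, bounds $M^{\#}(\abs{a})(i)$ by $2M^{\#}a(i)$.) I do not expect a genuine obstacle here: unlike the earlier theorems of this section, only the triangle inequality for finite sums over a single interval is used, so there is no need to double intervals or leave the dyadic family. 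The one point to watch is the factor $2$ in the lower bound of part (1) — $\abs{b-a_I}$ can only be absorbed into the very average $\frac{1}{\abs{I}}\sum_{n\in I}\abs{a(n)-b}$ one is trying to estimate — which is exactly why the constant on the left of (1) is $\tfrac12$ and not $1$.
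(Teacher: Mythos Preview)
Your argument is correct and matches the paper's own proof of part (1) essentially line for line: triangle inequality to split off $\abs{b-a_I}$, then rewrite $b-a_I=\frac{1}{\abs{I}}\sum_{n\in I}(b-a(n))$ to absorb it back, giving the factor $2$; the upper bound is declared obvious in both. For part (2) you are in fact more precise than the paper: under the stated non-negativity hypothesis the inequality is an identity, whereas the paper invokes the reverse triangle inequality $\bigl|\abs{a}-\abs{b}\bigr|\le\abs{a-b}$ as if treating the general real case --- exactly the situation you describe in your parenthetical remark, where one only gets $M^{\#}(\abs{a})\le 2M^{\#}a$.
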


\begin{proof}

For first inequality, note for all $b \in \RR$, 
\[ \sum_{ m \in I} \abs{a(m) - a_I }  \leq \sum_{ m \in I} \abs{a(m) - b} + \sum_{ m \in I} \abs{b-a_I} = A + B (say)  \]
Now
\begin{align*}
B &= \abs{I} \abs{ b - a_I} = \abs{I} \abs{ b - \frac{1}{\abs{I}} \sum_{k \in I} a(k) } \\
&=\abs{I} \abs{ \frac{1}{\abs{I}} \biggl( \sum_{k \in I}  ( b -a(k)) \biggr) }  \leq \sum_{ k \in I} \abs{ b -a(k) } 
\end{align*}
So,
\[ \sum_{ m \in I} \abs{a(m) - a_I }  \leq \sum_{ m \in I} \abs{a(m) - b} + \sum_{ m \in I} \abs{b-a_I} \leq 2 \sum_{ m \in I} \abs{a(m) - b}   \]

Now, divide both sides by $\abs{I}$, and take infimum over all $b$ followed by, supremum over all $I$. This proves
\[ \quad \frac{1}{2} \normb{a}_{\star} \leq \sup_{m \in I} \inf_{b \in \RR} \frac{1}{\abs{I}} \abs{ a(m)- b}\]
The proof for second inequality
\[ \sup_{m \in I} \inf_{b \in \RR} \frac{1}{\abs{I}} \abs{ a(m)- b} \leq \normb{a}_\star \] is obvious.

The proof of (2) follows from the fact that $\abs { \abs{a} - \abs{b} } \leq \abs{a} - \abs{b} $ for any $ a, b \in \RR$.
%
\end{proof}

\begin{lemma}{\label{Sak_lem_6.10}}
If $a \in \ell^{p_0}(\ZZ) $ for some $p_0$, $ 1 \leq p_0 < \infty$, then for all $ \gamma > 0$ and $ \lambda >0$
\[ \abs { \left \{ n \in \ZZ: M_d a(n) > 2\lambda , M^{\#} a(n) \leq \gamma \lambda \right \} } \leq 2 \gamma \abs{ \left \{ n \in \ZZ: M_d a(n) > \lambda \right \} } \]
\end{lemma}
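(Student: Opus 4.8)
The plan is to mimic the classical good-$\lambda$ inequality argument from the continuous setting, using the dyadic Calderon--Zygmund stopping-time decomposition provided by the Calderon--Zygmund theorem in the excerpt. First I would apply that theorem (with $\alpha=0$) to the sequence $\abs{\mathbf a}$ at height $\lambda$ to obtain disjoint dyadic intervals $\{I_j\}$ with $\lambda < \frac{1}{\abs{I_j}}\sum_{k\in I_j}\abs{a(k)} \le 2\lambda$ and such that $M_d a(n) > \lambda$ forces $n \in \cup_j I_j$; conversely each $I_j \subseteq \{n : M_d a(n) > \lambda\}$. So it suffices to show that for each fixed $j$,
\[ \abs{\{ n \in I_j : M_d a(n) > 2\lambda,\ M^{\#}a(n) \le \gamma\lambda \}} \le 2\gamma \abs{I_j}, \]
and then sum over $j$.

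The key localization step is the observation that, for $n \in I_j$, any dyadic interval $I \ni n$ with $\frac{1}{\abs{I}}\sum_{k\in I}\abs{a(k)} > 2\lambda$ must be strictly contained in $I_j$: indeed if $I \supseteq I_j$ then by the dyadic nesting $I$ would be one of the candidate intervals at the stopping time, but $I_j$ was chosen maximal (equivalently, the parent of $I_j$ has average $\le\lambda$), so the average on $I$ is $\le\lambda$, a contradiction. Hence for $n \in I_j$ we have the pointwise identity $M_d a(n) > 2\lambda \iff M_d(a\mathbf{1}_{I_j})(n) > 2\lambda$, where the restricted maximal function uses only dyadic subintervals of $I_j$. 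If the set in question is empty for this $j$ there is nothing to prove; otherwise pick $n_0 \in I_j$ with $M^{\#}a(n_0) \le \gamma\lambda$, so that $\frac{1}{\abs{I_j}}\sum_{k\in I_j}\abs{a(k) - a_{I_j}} \le \gamma\lambda$ (here I use $I_j \ni n_0$ as an admissible interval in the supremum defining $M^{\#}$).

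Now I would estimate, using the weak-type $(1,1)$ bound for the dyadic maximal operator on $I_j$ (which follows from the standard dyadic covering: the maximal dyadic subintervals on which the average of $\abs{a - a_{I_j}}$ exceeds $\lambda$ are disjoint, giving $\abs{\{n \in I_j : M_d((a - a_{I_j})\mathbf 1_{I_j})(n) > \lambda\}} \le \frac{1}{\lambda}\sum_{k\in I_j}\abs{a(k) - a_{I_j}}$). Since $\abs{a_{I_j}} \le 2\lambda$... wait, more carefully: on $I_j$, $M_d(a\mathbf 1_{I_j})(n) \le M_d((a - a_{I_j})\mathbf 1_{I_j})(n) + \abs{a_{I_j}}$, and $a_{I_j} = \frac{1}{\abs{I_j}}\sum_{k\in I_j}\abs{a(k)} \le 2\lambda$ (since $\mathbf a \ge 0$, or using $\abs{\mathbf a}$). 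The issue is I need the average to be $\le\lambda$ not $2\lambda$ to close the gap with the $2\lambda$ threshold, so I should instead use the parent $\widehat{I_j}$ of $I_j$, whose average of $\abs{a}$ is $\le\lambda$, in place of $a_{I_j}$; then $M_d a(n) > 2\lambda$ on $I_j$ forces $M_d((a - a_{\widehat{I_j}})\mathbf 1_{\widehat{I_j}})(n) > \lambda$, and the weak-type bound plus $\abs{\widehat{I_j}} = 2\abs{I_j}$ gives
\[ \abs{\{n \in I_j : M_d a(n) > 2\lambda,\ M^{\#}a(n) \le \gamma\lambda\}} \le \frac{1}{\lambda}\sum_{k \in \widehat{I_j}}\abs{a(k) - a_{\widehat{I_j}}} \le \frac{1}{\lambda}\cdot \abs{\widehat{I_j}}\cdot M^{\#}a(n_0) \le \frac{2\abs{I_j}\gamma\lambda}{\lambda} = 2\gamma\abs{I_j}. \]
Summing over $j$ and using $\sum_j \abs{I_j} = \abs{\cup_j I_j} \le \abs{\{n : M_d a(n) > \lambda\}}$ finishes the proof.

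The main obstacle I anticipate is exactly the bookkeeping in the previous paragraph: passing from $I_j$ to its dyadic parent so that the subtracted constant has average below $\lambda$ (matching the $2\lambda$ threshold), and making sure the doubling of the interval only costs the harmless factor $2$ that is already present in the statement. One must also be a little careful that $\widehat{I_j}$ is genuinely dyadic (true, since $I_j \in \mathcal I_N$ implies its parent is in $\mathcal I_{N+1}$), that $n_0$ exists (if not, the left side for that $j$ is zero), and that $M^{\#}$ is evaluated on the original sequence via the admissible interval $\widehat{I_j}$ — using $\mathbf a \ge 0$ or Lemma~\ref{Sak_Prop_6.5}(2) to reduce to $\abs{\mathbf a}$ if needed. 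Everything else is routine.
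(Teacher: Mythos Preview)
Your proposal is correct and follows essentially the same route as the paper: Calderon--Zygmund decomposition at height $\lambda$, localization of $M_d a > 2\lambda$ to each stopping interval $I_j$, subtraction of the average over the dyadic parent $\widehat{I_j}$ (the paper's $\tilde I$, i.e.\ $2LI$ or $2RI$) whose average is at most $\lambda$, and then the weak-$(1,1)$ bound for $M_d$ combined with the existence of a point $n_0\in I_j$ with $M^{\#}a(n_0)\le\gamma\lambda$ to produce the factor $2\gamma\abs{I_j}$. The only cosmetic difference is that the paper restricts $(a-a_{\tilde I})$ to $I$ rather than to $\tilde I$ before applying the weak-type bound, which changes nothing of substance.
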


\begin{proof}
Perform Calderon-Zygmumd decomposition for the sequence $\seq{a}$ at height $\lambda$, which gives collection of intervals $\left \{ I_j \right \}$ such that for each $j$,
\[ \lambda \leq \frac{1}{\abs{I_j}} \sum_{k \in I_j} \abs{a(k)}  \leq 2 \lambda \]
Let $I$ be one of the interval in the collection $\left \{ I_j \right \}$. In Calderon-Zygmund decomposition, there exists interval $\tilde{I}$ such that $\tilde{I}$ is either $2RI$ or $2LI$ and 
\[ \frac{1}{\abs{\tilde{I}}} \sum_{k \in \tilde{I}} \abs{a(k)} \leq \lambda \] For details, refer to section on Preliminaries and Notation.

It is easy to observe that $ \forall m \in I$, $M_d a(m) > 2 \lambda $ implies $ M_d( a \chi_{I} )(m) >2\lambda $. 
Let,
\begin{align*}
a_1 &= ( a- a_{\tilde{I}}) \chi_I \\
a_2 &= a_{\tilde{I}}\chi_I
\end{align*}
Then, since $M_d$ is sublinear
\begin{align*}
a_1+a_2 &= (a-a_{\tilde{I}} ) \chi_I + a_{\tilde{I}} \chi_{I} \\
M_d(a_1+a_2) &\leq M_d( ( a- a_{\tilde{I}}) \chi_I ) + M_d( a_{\tilde{I}} \chi_I)   \\
&\leq M_d( ( a- a_{\tilde{I}}) \chi_I ) +( a_{\tilde{I}}) 
\end{align*}
Since $M_d( a_{\tilde{I}} \chi_I ) (k) \leq a_{\tilde{I}} \quad \forall k$, it follows that
\[M_d(a_1+a_2) = M_d(a\chi_I) \leq M_d( ( a- a_{\tilde{I}}) \chi_I ) +( a_{\tilde{I}}) \]
Hence for every, $k \in I$, it follows that
\[ M_d( ( a- a_{\tilde{I}}) \chi_I ) (k) \geq  M_d(a\chi_I)(k) -a_{\tilde{I}} \]
So, for those $k's$,
\[ M_d (( a-a_{\tilde{I}})\chi_I)(k) \geq M_d (a \chi_I)(k) - a_{\tilde{I}} > \lambda \]
By remark [$\ref{Sak_rem_1}$], using $weak(1,1)$ inequality for $M_d$
\begin{align*}
\left \{ k \in \ZZ: M_d ((a- a_{\tilde{I}}) \chi_I )) (k) > \lambda \right \}  & \leq \frac{C}{\lambda} \sum_I \abs{ a(k) - a_{\tilde{I}} } \\
& \leq \frac{2} { {\lambda}} \abs{I}\frac{C}{\abs{\tilde{I}}} \sum_{\tilde{I}} \abs{a(k) -a_{\tilde{I}} } \\
& \leq \frac{2C} { {\lambda}} \abs{I} \inf_{m \in I} M^{\#} a(m) \\
& \leq \frac{2C}{ {\lambda}} \gamma \lambda \abs{I} = 2C \gamma \abs{I}
\end{align*}
\end{proof}

As a consequence of good-$\lambda$ inequality, we prove the following theorem.
\begin{theorem}{\label{Sak_lem_7.10}}
Let $\seq{a}$ be a nonnegative sequence in $\ell^p(\ZZ), 1 < p < \infty$.Then
\[ \sum_{m \in \ZZ} \abs{M_d a(m)}^p   \leq C \sum_{m \in \ZZ} \abs{M^{\#}a(m)}^p  \]
where $M_d$ is the dyadic maximal operator and $M^{\#}$ is the sharp maximal operator, whenever, the left hand side is finite.
\end{theorem}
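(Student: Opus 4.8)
The plan is to derive the $L^p$ bound from the good-$\lambda$ inequality of Lemma~\ref{Sak_lem_6.10} by the standard distribution-function integration, using the hypothesis that the left-hand side is finite to justify all manipulations. First I would recall the formula
\[ \sum_{m \in \ZZ} \abs{M_d a(m)}^p = \int_0^\infty p \lambda^{p-1} \abs{\left\{ m \in \ZZ : M_d a(m) > \lambda \right\}}\, d\lambda, \]
and similarly for $M^{\#}a$. Write $\mu(\lambda) = \abs{\{ m : M_d a(m) > \lambda\}}$ and $\nu(\lambda) = \abs{\{ m : M^{\#}a(m) > \lambda\}}$. After the change of variables $\lambda \mapsto 2\lambda$, the quantity to bound is $2^p \int_0^\infty p\lambda^{p-1}\mu(2\lambda)\, d\lambda$. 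The key step is to split the set $\{M_d a > 2\lambda\}$ according to whether $M^{\#}a(m) \le \gamma\lambda$ or $M^{\#}a(m) > \gamma\lambda$: by Lemma~\ref{Sak_lem_6.10},
\[ \mu(2\lambda) \le 2\gamma\, \mu(\lambda) + \nu(\gamma\lambda). \]

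Next I would multiply through by $p\lambda^{p-1}$ and integrate. The term involving $\nu(\gamma\lambda)$ integrates, after the substitution $u = \gamma\lambda$, to $\gamma^{-p}\sum_m \abs{M^{\#}a(m)}^p$ up to the constant $2^p$; the term $2\gamma\,\mu(\lambda)$ integrates to $2\gamma \sum_m \abs{M_d a(m)}^p$ up to the constant $2^p$. So altogether
\[ \sum_{m} \abs{M_d a(m)}^p \le 2^{p+1}\gamma \sum_{m} \abs{M_d a(m)}^p + 2^p \gamma^{-p}\sum_{m} \abs{M^{\#}a(m)}^p. \]
Here is where finiteness of the left-hand side is essential: it lets me subtract the first term on the right from the left. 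I would then choose $\gamma$ small enough that $2^{p+1}\gamma \le \tfrac12$ (for instance $\gamma = 2^{-p-2}$), absorb that term, and conclude with $C = 2^{p+1}\gamma^{-p} = 2^{p+1}(2^{p+2})^p$, which depends only on $p$.

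The main obstacle is bookkeeping rather than conceptual: one must be careful that the constant $C$ appearing in Lemma~\ref{Sak_lem_6.10} (inherited from the weak-$(1,1)$ bound for $M_d$ via Remark~\ref{Sak_rem_1}) is a fixed absolute constant, so that the final $\gamma$ can be chosen depending only on $p$ and $C$, and then the absorbed term is genuinely controlled. A secondary point to check is that $a \in \ell^p(\ZZ) \subseteq \ell^{p_0}(\ZZ)$ for a suitable $p_0$ — here $p_0 = p$ works since $1 < p < \infty$ — so that Lemma~\ref{Sak_lem_6.10} applies; and that the distribution-function identity for the $\ell^p$ norm used in the excerpt is invoked consistently. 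Once finiteness of $\sum_m \abs{M_d a(m)}^p$ is granted, no further delicacy arises, and the argument is a direct transcription of the classical Fefferman–Stein argument to the sequence setting.
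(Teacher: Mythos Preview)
Your proposal is correct and follows essentially the same Fefferman--Stein good-$\lambda$ argument as the paper: both split $\{M_d a > 2\lambda\}$ according to the size of $M^{\#}a$, invoke Lemma~\ref{Sak_lem_6.10}, integrate against $p\lambda^{p-1}\,d\lambda$, and then absorb the $M_d$ term by choosing $\gamma$ small. The only cosmetic difference is that the paper first truncates the integral to $[0,N]$ (so that the absorption step is between finite quantities for free) and then lets $N\to\infty$, whereas you work directly with the full integral and invoke the stated hypothesis that $\sum_m |M_d a(m)|^p<\infty$ to justify the subtraction; both are valid here.
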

\begin{proof}
For a positive integer $N>0$, let
\[ I_N  = \int_0^N p \lambda^{p-1} \abs{  \left \{ m \in \ZZ: M_d a(m) > \lambda \right \} } d\lambda   \]
$I_N$ is finite, since $ a \in \ell^{p}(\ZZ) $ implies $M_d a \in \ell^{p}(\ZZ) $ 
\begin{align*}
 I_N & = \int_0^N p \lambda^{p-1} \abs{  \left \{ m \in \ZZ: M_d a(m) > \lambda \right \} } d\lambda  \\
&= 2^p \int_0^{\frac{N}{2}}  p \lambda^{p-1} \abs{  \left \{ m \in \ZZ: M_d a(m) > 2\lambda \right \} }  d\lambda\\
& \leq 2^p \int_0^{\frac{N}{2}}  p \lambda^{p-1} \abs{  \left \{ m \in \ZZ: M_d a(m) > 2\lambda, M^\# a(m) \leq \gamma \lambda \right \} } d\lambda  +  \\
& 2^p \int_0^{\frac{N}{2}}  p \lambda^{p-1} \abs{  \left \{ m \in \ZZ: M_d a(m) > 2\lambda, M^\# a(m) > \gamma \lambda \right \} } d\lambda  \\
& \leq 2^p \int_0^{\frac{N}{2}}  p \lambda^{p-1} C {\gamma}  \abs{  \left \{ m \in \ZZ: M_d a(m) > \lambda  \right \} } d\lambda  +  \\
& 2^p \int_0^{\frac{N}{2}}  p \lambda^{p-1} \abs{  \left \{ m \in \ZZ: M^\# a(m) > \gamma \lambda \right \} } d\lambda \\
& \leq 2^p C {\gamma} \int_0^N  p \lambda^{p-1}   \abs{  \left \{ m \in \ZZ: M_d a(m) > \lambda  \right \} } d\lambda \quad +  \\
& 2^p \int_0^{\frac{N}{2}}  p \lambda^{p-1} \abs{  \left \{ m \in \ZZ: M^\# a(m) > \gamma \lambda \right \} }  d\lambda
\end{align*}
It follows that
\[ 
(1 - 2^p C {\gamma})I_N \leq  2^p \int_0^{\frac{N}{2}}  p \lambda^{p-1} \abs{  \left \{ m \in \ZZ: M^\# a(m) > \gamma \lambda \right \} } d\lambda \]
Now choose $\gamma = \frac{1}{C2^{p+1}}$ such that $(1 - 2^p C {\gamma}) = \frac{1}{2}$. Then,
\begin{align*}
\frac{1}{2}I_N \leq 2^p \int_0^{\frac{N}{2}}  p \lambda^{p-1} \abs{  \left \{ m \in \ZZ: M^\# a(m) > \gamma \lambda \right \} }   d\lambda \\
\leq \frac{2^p}{\gamma^p}  \int_0^{\frac{N}{2}}  p \lambda^{p-1} \abs{  \left \{ m \in \ZZ: M^\# a(m) > \lambda \right \} }  d\lambda
\end{align*}
Now, take $N \to \infty$,  we get

\[ \sum_{m \in \ZZ} M_d {a(m)}^p  \leq C \sum_{ m \in \ZZ} {M^{\#}a(m)}^p  \]

\end{proof}

%
%

\label{'ubl'}  
\end{document}